\theoremstyle{plain}
\newtheorem{theorem}{Theorem}[section]
\newtheorem{corollary}[theorem]{Corollary}
\newtheorem{lemma}[theorem]{Lemma}
\newtheorem{proposition}[theorem]{Proposition}
\theoremstyle{definition}
\newtheorem{remark}[theorem]{Remark}
\newtheorem{question}[theorem]{Question}
\newcommand{\IB}{\mathbb{B}}
\newcommand{\IC}{\mathbb{C}}
\newcommand{\IF}{\mathbb{F}}
\newcommand{\IN}{\mathbb{N}}
\newcommand{\IP}{\mathbb{P}}
\newcommand{\IR}{\mathbb{R}}
\newcommand{\IZ}{\mathbb{Z}}
\DeclareMathOperator{\Hom}{Hom}
\DeclareMathOperator{\sign}{sign}
\DeclareMathOperator{\Ric}{Ric}
\DeclareMathOperator{\Ends}{Ends}
\DeclareMathOperator{\Int}{Int}
\title[Impossibility of complex-hyperbolic Einstein Dehn filling]{On the impossibility of four-dimensional complex-hyperbolic Einstein Dehn filling}
\author{Luca F. Di Cerbo}
\email{ldicerbo@ufl.edu}
\address{Department of Mathematics, University of Florida, Gainesville, United States}
\author{Marco Golla}
\email{marco.golla@univ-nantes.fr}
\address{CNRS, Laboratoire Jean Leray, Nantes University, Nantes, France}
\date{}
\begin{document}
	
	\maketitle
	
	%%%%%%%%%%%%%%%%%%%%
	\begin{abstract}
		We show that the complex-hyperbolic Einstein Dehn filling compactification cannot possibly be performed in dimension four.
	\end{abstract}
		
	%%%%%%%%%%%%%%%%%%%%
	
	\tableofcontents
		
	%%%%%%%%%%%%%%%%%%%%%%%%%%%%%%%%
	
	\section{Introduction and statement of the main result}\label{intro}

	Hyperbolic Dehn filling is a remarkable construction, due to Thurston~\cite{Thurston}, that allows to build hyperbolic metrics on closed 3-manifolds starting from non-compact complete hyperbolic 3-manifolds of finite volume. Anderson~\cite{And06} adapted this construction to higher dimension, allowing to build \emph{Einstein} metrics on $n$-manifolds starting from non-compact hyperbolic $n$-manifolds of finite volume, whose ends (or \emph{cusps}) are all diffeomorphic to $T^{n-1}\times \IR^+$, where $T^{n-1}$ is a $(n-1)$-dimensional torus. This construction is called \emph{hyperbolic Einstein Dehn filling}.
	Given the nature of the arguments in~\cite{And06}, there was hope that a similar construction could work starting from a \emph{complex-hyperbolic} $n$-manifold (note that $n$ is the complex dimension).
	%\MG{Forse dovremmo dire \emph{qui} cosa vogliamo dire con ``complex hyperbolic'', almeno a parole? Tipo i.e. ball quotients}
	This problem is for example mentioned in the survey~\cite[Page~28]{And10}. We show that such a construction is not allowed, i.e., that \emph{complex-hyperbolic Einstein Dehn filling} on ball quotients in dimension four is impossible.
	
	It is well-known that complex-hyperbolic surfaces have infra-nilmanifold cusp cross-sections. Torus-like cusps, i.e., cusps whose ideal boundary fibers over a torus, are parametrized by their \emph{Euler number}, which is a positive integer $e > 0$. Moreover, the result of Dehn filling a torus-like cusp is uniquely determined up to diffeomorphism (see Proposition~\ref{p:uniquefilling}). If a complex-hyperbolic surface has only torus-like cusps, we will call the result of filling all cusps the \emph{Dehn filling compactification} of the surface.
	
	\begin{theorem}\label{main-compact}
	For each positive integer $e$ there exists a complex-hyperbolic surface $X_e$ with cusps, all torus-like and with Euler number $e$, whose Dehn filling compactification does not admit any Einstein metric.
	\end{theorem}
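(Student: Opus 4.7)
The plan is to obstruct any Einstein metric on the Dehn filling compactification $\hat X_e$ via the Hitchin-Thorpe inequality $2\chi \geq 3|\sigma|$, by computing both invariants explicitly and then producing $X_e$ so that the inequality fails. I would first identify $\hat X_e$ with the smooth Mumford toroidal compactification $\bar X_e$ of $X_e$: both constructions fill each torus-like cusp by the (unique, by Proposition~\ref{p:uniquefilling}) disk bundle over $T^2$ matching the cusp boundary, so $\hat X_e$ is diffeomorphic to $\bar X_e$ as an oriented smooth four-manifold. This lets me compute $\chi(\hat X_e)$ and $\sigma(\hat X_e)$ with complex-analytic tools.

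Let $D = \sum_i E_i \subset \bar X_e$ be the compactifying divisor, with each $E_i$ an elliptic curve of self-intersection $E_i^2 = -e$ coming from the $i$-th cusp. Hirzebruch-Mumford proportionality for the log pair $(\bar X_e, D)$ reads
\[
(K_{\bar X_e} + D)^2 = 3\chi(X_e), \qquad c_2(\bar X_e) = \chi(X_e).
\]
Adjunction on each $E_i$ (and disjointness of the $E_i$) yields $(K_{\bar X_e}+D)\cdot E_i = 2g(E_i)-2 = 0$, hence $K_{\bar X_e}\cdot E_i = e$ and $K_{\bar X_e}^2 = 3\chi(X_e) - ec$, where $c$ is the number of cusps. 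Combining with Hirzebruch's signature formula,
\[
\chi(\hat X_e) = \chi(X_e), \qquad \sigma(\hat X_e) = \frac{\chi(X_e) - ec}{3}.
\]
Inserted into Hitchin-Thorpe, these numbers force $\chi(X_e) \geq ec/3$, equivalently $K_{\bar X_e}^2 \geq 0$, as a necessary condition for an Einstein metric on $\hat X_e$. So the theorem reduces to producing, for every $e \geq 1$, a complex-hyperbolic surface $X_e$ with all cusps torus-like of Euler number $e$ and $K_{\bar X_e}^2 < 0$.

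This construction step is the main obstacle and I expect it to be the technical core of the proof. My plan is to build $X_e$ from known families of arithmetic ball quotients — for instance, Picard modular surfaces or Deligne-Mostow lattices — together with suitable covers. The Euler number of each cusp can be controlled by choosing a congruence level structure along the cusps, which typically forces a common prescribed value of $e$. The key numerical requirement $\chi(X_e)/c < e/3$ corresponds to requesting ball quotients with many cusps relative to their Euler characteristic; satisfying this while simultaneously fixing the Euler number of every cusp to be exactly $e$ — especially in the borderline case $e=1$ — is the delicate point and may require combining several distinct arithmetic lattices, or passing to a non-arithmetic construction where cusps can be proliferated while $\chi$ is kept small. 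Once such an $X_e$ is in hand the obstruction fires and $\hat X_e$ admits no Einstein metric.
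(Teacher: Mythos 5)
Your obstruction half is correct and coincides with the paper's strategy: identify the Dehn filling compactification with the smooth toroidal compactification $\bar X_e$ (via Proposition~\ref{p:uniquefilling}), and rule out Einstein metrics by the Hitchin--Thorpe inequality, which is a homotopy-invariant obstruction. Your numerics also check out: the log-BMY equality $(K_{\bar X_e}+D)^2 = 3\chi(X_e)$ together with adjunction gives $K_{\bar X_e}\cdot E_i = e$, hence $K_{\bar X_e}^2 = 3\chi(X_e) - ec$ and $\sigma(\bar X_e) = \bigl(\chi(X_e) - ec\bigr)/3$, so that violating $\chi \geq \tfrac{3}{2}|\sigma|$ is equivalent to $K_{\bar X_e}^2 < 0$. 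This is precisely the paper's guiding observation that the relevant examples are those whose toroidal compactifications have non-nef canonical divisor (for the paper's examples: $\chi = e$, $c = 4$, $\tau = -e$, and $e < \tfrac{3}{2}e$).

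The genuine gap is that Theorem~\ref{main-compact} is an existence statement and you never produce $X_e$: you explicitly defer what you call ``the technical core'' to an unspecified search among Picard modular or Deligne--Mostow quotients with congruence level structure, with no argument that one can \emph{simultaneously} achieve $3\chi(X_e) < ec$ and force every cusp cross-section to be a circle bundle of Euler number exactly $e$, for every $e \geq 1$ --- and you yourself flag $e=1$ as possibly out of reach. This construction is the actual content of the paper's proof, and it is handled concretely and elementarily rather than arithmetically: Hirzebruch's example (blow up $E_\zeta \times E_\zeta$ at the common point of the four elliptic curves $C_0, C_\infty, C_1, C_\zeta$) gives $\chi = 1$ with four cusps of Euler number $1$, settling the borderline case $e=1$ directly; for general $e$, Lemma~\ref{l:primecover} and Corollary~\ref{c:cover} produce, by a hyperplane-counting argument in $H_1(A;\IF_p)$, an $e$-fold \'etale cover of the Abelian surface in which all four curves lift to \emph{connected} tori meeting transversally in $e$ quadruple points; blowing these up yields $\chi = e$, $\tau = -e$, four disjoint $(-e)$-tori, and complex-hyperbolicity of the complement is certified by equality in log-BMY via Tian--Yau. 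So your reduction is sound, but as written the proposal proves only the conditional statement ``if such an $X_e$ exists, its filling is not Einstein,'' leaving the existence --- the theorem itself --- open.
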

	
	The nature of the proof of the theorem is topological. We will show that the closed $4$-manifolds obtained by filling the cusps of $X_e$ violates the Hitchin--Thorpe inequality, and therefore cannot support an Einstein metric. In fact, since the Hitchin--Thorpe inequality only depends on the homotopy type of the $4$-manifold, this is true for any $4$-manifold homeomorphic to the Dehn filling compactification.
	
	Based on similar ideas, we prove the following non-compact version of Theorem~\ref{main-compact}. In this case, though, we bring in more geometry: Dai and Wei's logarithmic version of the Hitchin--Thorpe inequality and Cheeger and Gromoll's splitting theorem.
	
	\begin{theorem}\label{main-noncompact}
	For each integer $e > 1$ there exists a complex-hyperbolic surface $Y_e$ with $e+3$ cusps, all torus-like, three of which have Euler number $e$ and the rest with Euler number $1$, such the $4$-manifold obtained from $Y_e$ by Dehn filling the first three cusps does not admit an Einstein metric with fibered cusp structure at infinity.
	\end{theorem}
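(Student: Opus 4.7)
The plan is to follow the template of Theorem~\ref{main-compact} but adapted to the non-compact setting. I would construct $Y_e$ with a specific cusp configuration, compute the Euler characteristic and signature of the partially Dehn-filled $4$-manifold $M_e$ (the manifold obtained by filling only the three Euler-number-$e$ cusps), and then use Dai and Wei's logarithmic Hitchin--Thorpe inequality as an Einstein obstruction, invoking Cheeger--Gromoll rigidity to dispose of the equality case.

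To produce $Y_e$ I would start from a complex-hyperbolic surface with a well-understood cuspidal configuration, for instance a suitable arithmetic ball quotient or a Deligne--Mostow surface, and pass to a finite (possibly branched) covering designed so that exactly three cusps acquire Euler number $e$ while the remaining $e$ cusps have Euler number $1$. Tracking how Euler numbers behave under coverings, together with the proportionality principle for $\chi$ and Hirzebruch's signature formula with the standard cuspidal corrections, then gives explicit formulas for $\chi(Y_e)$ and $\sigma(Y_e)$. Since Proposition~\ref{p:uniquefilling} pins down each Euler-number-$e$ Dehn filling up to diffeomorphism, the invariants $\chi(M_e)$ and $\sigma(M_e)$ are also explicit functions of $e$, with the three filling caps contributing definite corrections to each.

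The geometric heart of the argument is Dai and Wei's logarithmic Hitchin--Thorpe inequality, which for a complete Einstein $4$-manifold with fibered cusp structure at infinity takes the schematic shape
\[
2\chi(M_e) - 3|\sigma(M_e)| \;\geq\; C_e,
\]
where $C_e$ depends only on the asymptotic geometry of the $e$ open Euler-number-$1$ nilmanifold cusps (through $\eta$-type defects of their cross-sections) and can be evaluated explicitly. The numerical target is to arrange, via the choice of the three Euler-number-$e$ fillings, for the left-hand side to strictly violate this bound once $e > 1$, giving a contradiction in the strict case. In the equality/degenerate case, the Dai--Wei bound forces an Einstein metric that is Ricci-flat and contains a line, so the Cheeger--Gromoll splitting theorem produces a global isometric splitting of the form $\IR \times N$; this is ruled out by the topology of $M_e$ (for instance through the $\pi_1$ of $M_e$ or through the asymptotic geometry of the open cusps, which is nilmanifold and not cylindrical), closing off the degenerate case as well.

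The main obstacle I anticipate is a clean evaluation of the boundary contribution $C_e$ for an Euler-number-$1$ complex-hyperbolic cusp: this requires matching the adiabatic/nilmanifold asymptotics of the cusp with the $\eta$-invariant terms entering Dai and Wei's formula precisely enough that the resulting estimate detects the topological jump produced by replacing an open Euler-number-$e$ cusp by its unique Dehn filling. Once the per-cusp defect is computed, the construction of $Y_e$ and the final contradiction reduce to explicit topological bookkeeping against the Dai--Wei bound, in direct analogy with how the closed Hitchin--Thorpe inequality drove the proof of Theorem~\ref{main-compact}.
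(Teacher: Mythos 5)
Your toolkit matches the paper's (Dai--Wei plus Cheeger--Gromoll), but your primary mechanism is miscalibrated: for this cusp profile and this filling there is \emph{no strict violation} of the Dai--Wei inequality to be had --- the filled manifold lands exactly on the borderline, and the entire proof is what you relegate to the ``equality/degenerate case.'' Concretely, in the paper's construction $\chi(\overline Y)=e$ and, by Novikov additivity, $\tau(Y_e)=\tau(Z_e)+(e+3)=3$, so filling the three Euler-number-$e$ cusps (each capping off a negative-definite disc bundle) gives $\tau(\overline Y)=0$; the $e$ remaining cusps each contribute $\frac{e(E)-3\sign(e(E))}{3}=-\frac23$, so the right-hand side of Theorem~\ref{dai} is $\frac32\cdot\frac{2e}{3}=e=\chi(\overline Y)$. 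This is not an accident of the example: for ball quotients the $L^2$-proportionality $\tau_{L^2}=\chi/3$ pins down the sum $\tau+\sum\frac{e(E)-3}{3}$ before filling, and each filling shifts the quantity inside the absolute value by exactly $-e(E)/3$, so with three Euler-$e$ cusps filled one gets $\frac12|\chi-3e|$, which equals $\chi$ precisely when $\chi=e$. Relatedly, your schematic $2\chi-3|\sigma|\geq C_e$ puts the cusp defects in the wrong place: they sit \emph{inside} the absolute value with $\tau$, and this sign interplay is exactly what the computation exploits. The good news is that your fallback is the paper's actual argument: Dai--Wei rigidity forces the metric to be Ricci-flat, the $e\geq 2$ remaining ends produce a line, and Cheeger--Gromoll splits the manifold. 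The paper then closes cleanly with $\chi=0$ for a split manifold versus $\chi(\overline Y)=e\neq 0$; your proposed alternatives ($\pi_1$ or ``nilmanifold versus cylindrical'' asymptotics) are vaguer and would need genuine work (e.g., $N$ in the splitting $\IR\times N$ need not be compact when $e>2$).

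The second gap is the construction of $Y_e$ itself, which is half the theorem and which you leave essentially unspecified. ``A suitable arithmetic or Deligne--Mostow surface plus a (possibly branched) covering designed so that exactly three cusps acquire Euler number $e$'' gives no mechanism for producing this precise cusp profile, and branched covers do not preserve complex-hyperbolicity in any straightforward way. The paper instead builds $Y_e$ explicitly: on $A_e=\IC/\IZ[1,\zeta]\times\IC/\IZ[e,\zeta]$ it takes the three curves $w=0$, $w=z$, $w=\zeta z$ together with the $e$ vertical curves $z=0,\dots,z=e-1$, blows up the $e$ transverse intersection points, checks that the resulting $e+3$ disjoint elliptic curves saturate the log-BMY inequality ($(K_{Z_e}+D^e)^2=3e=3\chi$), and invokes Tian--Yau uniformization; no proportionality-under-coverings bookkeeping of the kind you describe is needed. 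Finally, your anticipated ``main obstacle'' --- computing the per-cusp $\eta$-type defect via adiabatic limits --- is a non-issue: Dai and Wei's formula already evaluates it as $\frac{e(E)-3\sign(e(E))}{3}$ for circle bundles over surfaces, and the paper simply cites it.
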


%{\color{blue}	
%	\begin{theorem}
%		There are examples of complex-hyperbolic surfaces with ANY nilmanifolds cusps cross sections, such that any of their Dehn fillings violate the Hitchin-Thorpe inequality and as such cannot support Einstein metrics.
%	\end{theorem}
%}
	
	The remarkable property of the surfaces $X_e$ and $Y_e$, that ultimately makes them useful to prove the non-existence of Einstein Dehn filling compactifications, is that they admit smooth toroidal compactifications with \emph{non-nef} canonical divisors. Recall that a toroidal compactification is a Dehn filling compactification that minimally and uniquely compactifies the complex structure of the complex-hyperbolic surface with nilmanifold cusps, see \cite[Section~1.1]{JLMS} and \cite[Proposition 2.3]{DiCerbo2} for more details. We note that such examples cannot exist if the complex dimension is bigger than or equal to three, as shown by G. Di Cerbo and the first author~\cite{DiCerbo2}. Thus, somewhat interestingly, this paper highlights a new corollary of the fact that complex-hyperbolic geometry is special in complex dimension two. This peculiarity, when combined with the Hitchin--Thorpe inequality, is ultimately responsible for the non-existence of the complex-hyperbolic Einstein Dehn filling in real dimension four.

	We point out that most complex-hyperbolic surfaces with cusps do \emph{not} satisfy Theorem~\ref{main-compact}.
	%\MG{Reference to \ref{main-compact} instead of the old theorem}
	Indeed, as proved in \cite[Theorem~A]{DiCerbo}, \emph{most} complex-hyperbolic surfaces with cusps have smooth toroidal compactifications with \emph{ample} canonical divisors, and they then support K\"ahler--Einstein metrics thanks to the celebrated work of Yau~\cite{YauFields}. The same is true in higher dimensions as well, in fact up to a finite \'etale cover any complex hyperbolic $n$-manifold admits a smooth toroidal compactification with ample canonical divisor \cite[Theorem 1.3]{DiCerbo2}. Again by Yau~\cite{YauFields}, the smooth toroidal compactification of the cover supports a K\"ahler--Einstein metric. Remarkably, Bakker and Tsimerman~\cite{BakkerTsimerman} have recently shown that \emph{any} smooth toroidal compactification of a complex hyperbolic $n$-manifold has ample canonical divisor if $n\geq 6$. Thus, in this range of dimensions there is no need to pass to a cover to equip a smooth toroidal compactification with a K\"ahler--Einstein metric.  

	\subsection*{Organization}

	In Section~\ref{s:background} we give some further background and motivation, and establish terminology and notation.
	In Section~\ref{mainthm} we recall some background on the Hitchin--Thorpe inequality and its logarithmic analogue. In Section~\ref{part1} we give the proof of Theorem~\ref{main-compact}, based on the Hitchin--Thorpe inequality, and in Section~\ref{alternative} we give the proof of Theorem~\ref{main-noncompact}, based on the Dai--Wei inequality.

	\subsection*{Acknowledgments}
The authors would like to thank the Mathematics Department of Stony Brook University for the ideal research environment they enjoyed during the 2010/2011 and 2017/2018 academic years, as well as the Simons Center for Geometry and Physics for the outstanding coffee breaks where this collaboration began, unbeknownst to the authors. They also thank Stefano Riolo for suggesting Proposition~\ref{p:uniquefilling}, and the referee for pertinent and constructive comments that helped us
%\MG{`helped us \emph{to} improve': ho tolto il `to'}
 improve the presentation. LFDC would like to thank Michael Anderson for useful email exchanges on this topic, and Claude LeBrun for suggesting to study Anderson's Dehn filling in dimension four. MG would like to thank Vestislav Apostolov, Yann Rollin, and Fran\c{c}ois Laudenbach. LFDC is partially supported by the NSF grant DMS-2104662.

%	The authors would like to thank the Mathematics Department of Stony Brook University for the ideal research environment they enjoyed during the 2010/2011 and 2017/2018 academic years, as well as the Simons Center for Geometry and Physics for the outstanding coffee breaks where this collaboration began, unbeknownst to the authors. They also thank Stefano Riolo and Fran\c{c}ois Laudenbach for help with Proposition~\ref{p:uniquefilling}. LFDC would like to thanks Michael Anderson for useful email exchanges on this topic, and Claude LeBrun for suggesting to study Anderson's Dehn filling in dimension four. MG would like to thank Vestislav Apostolov, Yann Rollin for encouragement. LFDC is partially supported by the NSF grant DMS-2104662.

	\section{Background and motivation}\label{s:background}
	Suppose that $X$ is an $n$-manifold of finite type; a codimension-0 submanifold $E \subset X$ is a an \emph{end} (or \emph{cusp}) if:
	\begin{itemize}
		\item $E$ is a cylinder, i.e., it is diffeomorphic to $Y\times [0,\infty)$ for some closed $(n-1)$-manifold $Y$, and
		\item there is a compact submanifold $K \subset X$ such that $E$ is the closure of one of the components of $X \setminus \Int(K)$.
	\end{itemize}
	We call $Y$ the \emph{ideal boundary} of $E$, and the disjoint union of the ideal boundaries of all ends of $X$ the \emph{ideal boundary} of $X$.
	
	We can \emph{truncate} a cusp by removing $Y \setminus (1,\infty)$ from $E \cong Y \times [0,\infty)$. This produces a manifold $X_{\rm tr}$ with one less end than $X$ and with a boundary component diffeomorphic to $Y$.
	A cusp is \emph{toral} if its ideal boundary $Y$ is an $(n-1)$-torus. We say that it is torus-like if $Y$ is a circle bundle over an $(n-2)$-torus. If $E$ is torus-like, then $Y$ is the boundary of a $2$-disc bundle $D_E$ over $T^{n-2}$.
	
	By \emph{Dehn filling} along a torus-like end $E$ of $X$ we mean the following: first we truncate $E$ to obtain a boundary component of $X_{\rm tr}$ diffeomorphic to $Y$, and we glue in the $2$-disk bundle $D_E$ to obtain a manifold $\overline X$ which has one less end than $X$. (Here $\partial D_E$ is $Y$ with its orientation reversed.) Such a gluing is determined by the choice of an orientation-reversing diffeomorphism $\partial D_E \to \partial E$. %Up to diffeomorphisms of $D_E$, such a diffeomorphism is in turn determined by the image of the boundary of a $D^2$-fiber of $D_E \to T^{n-2}$, which we think of as a geodesic $\sigma \subset Y \subset \partial X_{\rm tr}$. 
	Another way of looking at Dehn filling is to view $X$ as the complement of an embedded $(n-2)$-torus $T \subset \overline X$ (i.e., the $0$-section of $D_E$).

	Broadly speaking, the question we are interested in is whether, given a geometric (e.g., hyperbolic or complex-hyperbolic) structure on $X$, one can find another geometric (e.g., hyperbolic or Einstein) structure on $\overline X$.
	
	Let us first look at hyperbolic manifolds. First, recall that the ideal boundary of a hyperbolic $n$-manifold is \emph{flat}.
	If an orientable non-compact 3-manifold $M$ admits a complete real-hyperbolic metric of finite volume, then its ideal boundary is a collection of tori (since tori are the only orientable flat surfaces). It is a well-known fact that for each cusp there are at most \emph{finitely many} slopes $s$ such that Dehn filling along $s$ produces a manifold that has no complete hyperbolic metric of finite volume. The proof of this theorem was outlined by Thurston in his celebrated Princeton University notes~\cite{Thurston}, and then more details were provided by Neumann and Zagier (in the presence of an ideal triangulation)~\cite{NeumannZagier} and by Petronio and Porti (in the general case)~\cite{PetronioPorti}.
	
	By contrast, for $n > 3$, one \emph{cannot}, as in the 3-dimensional case, construct hyperbolic metrics on all but finitely many Dehn fillings of a complete, finite-volume hyperbolic $n$-manifold with an $(n-1)$-torus end. This is a consequence of Gromov and Thurston's so-called $2\pi$ Theorem (\cite[Section~2]{And06}). To see this, suppose that $X$ is a hyperbolic $n$-manifold with a complete hyperbolic metric of finite volume, where $n > 3$. The $2\pi$ Theorem ensures that any \emph{sufficiently large} Dehn filling $\overline X$ of $X$ admits a metric of non-positive sectional curvature in which the core $T$ of the filling (which is an $(n-2)$-torus) is a totally geodesic submanifold. (Here by \emph{sufficiently large} we mean that the surgery geodesic $\sigma$ is sufficiently long.) In particular, $\pi_1(T) \cong \IZ^{n-2}$ injects into $\pi_1(\overline X)$, which therefore contains a free Abelian subgroup of rank $2$. By Preissman's theorem~\cite{Preissman} (see also~\cite[Chapter~12]{doC92}), such Dehn-filled manifolds cannot support a real-hyperbolic metric.
	
	With that said, it is remarkable that Anderson~\cite{And06} was able to extend many features of Thurston's Dehn surgery for hyperbolic $3$-manifolds to higher dimensions by softening the hyperbolicity requirement on the Dehn-filled manifold. (Biquard~\cite{Biquard1} and Bamler~\cite{Bamler} have since strengthened this construction.) More precisely, Anderson requires the Dehn-filled manifold not to be hyperbolic, but rather to have an Einstein metric with negative cosmological constant. Recall that \emph{Einstein metrics} are Riemannian metrics $g$ for which the Ricci tensor is proportional to the metric
	\[
	\Ric_g=\lambda g
	\]
	for some constant $\lambda\in\IR$, where $\lambda$ is referred to as the \emph{cosmological} or \emph{Einstein constant}. We say that an Einstein manifold is negative, positive, or flat, according to whether $\lambda$ is negative, positive, or $0$.
	In dimensions at most three, being Einstein is equivalent to having constant sectional curvature. In higher dimensions, Einstein metrics are usually hard to construct. However, they are a very desirable class of metrics of interests, especially in dimension $4$: for instance, they are the fixed points of the volume-preserving Hamilton--Ricci flow, and, as pointed out in~\cite[Section~11]{LeB99}, it is tantalizing to imagine that Einstein four-dimensional pieces play the same role as hyperbolic pieces do in the case of Thurston's geometrization conjecture. We refer to the survey paper of Lott and Kleiner~\cite{KL08} on the monumental work of Perelman resolving the Geometrization conjecture~\cite{Perelman1, Perelman2}.
	%\MG{Added Perelman.}
	
	We now describe Anderson's construction in some detail. One starts with any orientable $n$-manifold $N$ with cusps, equipped with a complete real-hyperbolic metric $g_{-1}$ of finite volume. Note that $g_{-1}$ is automatically Einstein with cosmological constant $-(n-1)$: $\Ric_{g_{-1}}=-(n-1)g_{-1}$. The manifold $N$ has finitely many cusps $\Ends(N)$, and we suppose that each of them is toral. Given any subset $\mathcal{E} \subset \Ends(N)$, we can perform Dehn filling along each end in $\mathcal{E}$ by filling a simple closed geodesics $\sigma_E$ in the ideal boundary of each $E \in \mathcal{E}$. For simplicity, let
	\[
	\bar{\sigma}=(\sigma_1, \dots, \sigma_p)
	\]
	be the collection of filled simple geodesics, and we denote by $N_{\bar{\sigma}}$ the corresponding filled manifolds. Note that $N_{\bar{\sigma}}$ is closed if and only if $\mathcal{E} = \Ends(N)$, in other words if we fill all the cusps of $N$. In this case, $N_{\bar{\sigma}}$ is a Dehn filling compactification. As above, we say that the Dehn filling is sufficiently large, if each geodesic $\sigma_i\in\bar{\sigma}$ is sufficiently long. Anderson's Dehn filling theorem then tells us that, if a Dehn filling is sufficiently large, then it admits a negative Einstein metric that is asymptotic to the real-hyperbolic metric on the ends which are not filled. The proof of this remarkable theorem is analytical in nature. First, one needs to construct a clever Riemannian metric on the Dehn-filled manifold $N_{\bar{\sigma}}$ which is \emph{approximately} Einstein. Then, a lengthy and technical argument using the implicit function theorem produces an \emph{exact} Einstein metric on $N_{\bar{\sigma}}$ which is close to the original approximate Einstein metric in some H\"older topology. We refer to~\cite{And06, Biquard1, Bamler} for further details. 

	Anderson's theory produces large classes of both compact and complete Einstein manifolds in dimensions $n\geq 4$. It is tantalizing to ask to what extent this theory could be extended by varying the geometry on the manifold that we want to fill, but still obtaining an Anderson-type result. One of the categories that was considered for such a construction is that of complex-hyperbolic manifolds.

	Recall that a complex-hyperbolic manifold is a complex manifold, and in particular it is an $n$-dimensional manifold with $n$ even. When $n=4$, we speak about \emph{complex-hyperbolic surfaces}, rather than about complex-hyperbolic $4$-manifolds. A finite-volume complex-hyperbolic surface is a quotient of 
\begin{align}\label{def1}
\IB^2:=\{|z_1|^2+|z_2|^2<1, \quad (z_1, z_2)\in\IC^2\}
\end{align}
equipped with the symmetric Bergman K\"ahler metric
\begin{align}\label{def2}
\omega_{\rm B}:=2i\bar{\partial}\partial \log(1-\|z\|^2)
\end{align}
by a torsion free non-uniform lattice $\Gamma$ in $\text{PU}(2, 1)$, which is the group of holomorphic isometries of $(\IB^2, \omega_{\rm B})$. It follows that any complete finite-volume quotient $\Gamma\backslash \IB^2$ is K\"ahler--Einstein with
\[
\Ric_{\omega_{\rm B}}=-\frac{3}{2}\omega_{\rm B},
\]
where by slightly abusing notation we denote by $\omega_B$ the induced K\"ahler metric on the quotient $\Gamma\backslash \IB^2$. If now $\Gamma$ is non-uniform, we then have that $\Gamma\backslash\IB^2$ is a non-compact, finite-volume, complete complex-hyperbolic surface with finitely many cuspidal ends. Each cuspidal end has an ideal boundary $N$, where $N$ is a compact infra-nilmanifolds, i.e., a compact orientable quotient of the $3$-dimensional Heisenberg Lie group. If all parabolic isometries in $\Gamma$ have
%\MG{``no rotation part''? ``no rotational part''? il trattino non mi ispira fiducia}
\emph{no rotational} part, then all of the $N$'s are nilmanifolds. (We point out that this condition on the parabolic isometries, and therefore on the cross-sections of the cusps, can always be attained by passing to a finite index normal subgroup in $\Gamma$. We point the reader to~\cite[Section~1.1]{JLMS} for more details.)

The folklore idea concerning complex-hyperbolic Einstein Dehn filling proceeds now in parallel with the real-hyperbolic case. Start with a complex-hyperbolic surface whose cusps have nilmanifold cross sections (recall in the real-hyperbolic case we required tori cross sections), truncate a sub-collection of these cusps, and glue back in the associated non-trivial disk bundles over tori. Note that a $3$-nilmanifold $N^3$ is diffeomorphic to a non-trivial $S^1$-bundle over a torus, which in turn is determined by the Euler number of the associated $D^2$-bundle. The complex orientation on a complex-hyperbolic surface induces an orientation on the boundaries of the corresponding truncated $4$-manifold. With this orientation, the $S^1$-bundles ideal boundaries of the cusps have \emph{positive} Euler number---note that these manifolds have no orientation-reversing self-diffeomorphisms. Viewed from the perspective of the Dehn-filled manifold $\overline X$, each of the 2-tori $T$ in the complement $\overline X \setminus X$ has \emph{negative} self-intersection.

Contrarily to the real-hyperbolic case, there is no choice involved for the filling.

%\MG{Nuovo enunciato}
\begin{proposition}\label{p:uniquefilling}
There is a unique way to fill a torus-like cusp of a complex-hyperbolic surface (as defined in \eqref{def1} and \eqref{def2}), up to diffeomorphism.
\end{proposition}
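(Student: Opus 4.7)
The plan is to prove the stronger statement that every orientation-preserving self-diffeomorphism of $Y := \partial D_E$ extends to a self-diffeomorphism of $D_E$. This implies uniqueness of the filling, because any two orientation-reversing gluings $\phi_0, \phi_1 \colon \partial D_E \to \partial E$ differ by the orientation-preserving self-map $\phi_0^{-1}\phi_1$ of $Y$, which can then be extended over $D_E$ (with the identity extension on $X_{\rm tr}$) to produce an explicit diffeomorphism between the two filled $4$-manifolds.

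The first step is to reduce to fiber-preserving diffeomorphisms. The $3$-manifold $Y$ is a Heisenberg nilmanifold, a principal $S^1$-bundle over $T^2$ with non-zero Euler number $e$, and the fiber subgroup is precisely the centre of $\pi_1(Y)$. For nilmanifolds the natural map $\pi_0(\mathrm{Diff}(Y)) \to \mathrm{Out}(\pi_1(Y))$ is an isomorphism; as every outer automorphism preserves the centre up to sign, it preserves the fiber subgroup up to sign. Thus every element of $\mathrm{MCG}^+(Y)$ is represented by a fiber-preserving affine self-diffeomorphism of $Y$.

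The second step is to extend such a fiber-preserving map $\sigma$ over $D_E$. Writing $\bar\sigma \colon T^2 \to T^2$ for the induced map on the base, the action $\bar\sigma^*$ on $H^2(T^2;\IZ) \cong \IZ$ is $\pm 1$; the $S^1$-bundle of Euler number $-e$ is identified with that of Euler number $e$ by fiber reflection, so $\bar\sigma^* Y \cong Y$ as principal $S^1$-bundles. Since $D_E \cong Y \times_{S^1} D^2$ is the associated disk bundle, any bundle isomorphism $Y \to \bar\sigma^* Y$ induces a bundle isomorphism $D_E \to \bar\sigma^* D_E$, which composed with the natural map $\bar\sigma^* D_E \to D_E$ extends $\sigma$. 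Fiberwise $S^1$-rotations (gauge transformations), which account for the ambiguity in the choice of lift, extend to fiberwise disk rotations.

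The main technical point is the isotopy statement in the first step, namely that every self-diffeomorphism of $Y$ is isotopic---not merely homotopic---to an affine fiber-preserving one. For the Heisenberg nilmanifold this follows from the contractibility of $\mathrm{Diff}_0(Y)$, which in turn comes from the general rigidity of diffeomorphism groups of aspherical nilmanifolds; I would appeal to the standard literature on this rigidity (or, alternatively, to Waldhausen-type uniqueness of the Seifert fibration on $Y$) rather than reproduce the argument.
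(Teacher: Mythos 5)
Your proposal is correct in substance but takes a genuinely different route from the paper's proof. The paper decomposes the disc bundle $D_E$ into a fibered neighborhood $F$ of a fiber (a 4-dimensional 2-handle) and its complement $H$ (a 4-dimensional 1-handlebody): Waldhausen's theorem that every self-diffeomorphism of a 3-dimensional nilmanifold preserves the $S^1$-fibration up to isotopy pins down the attaching curve and framing of $F$, and Laudenbach--Poenaru's theorem that every self-diffeomorphism of $\partial H$ extends over $H$ disposes of the remaining gluing. You instead prove head-on the stronger statement that every orientation-preserving self-diffeomorphism of $Y$ extends over $D_E$: Mal'cev rigidity realizes every automorphism of $\pi_1(Y)$ by an affine diffeomorphism, the centre of $\pi_1(Y)$ is characteristic so these representatives are fiber-preserving, and fiber-preserving maps extend fiberwise over the associated disc bundle (with a fiber reflection absorbing the sign when $\det\bar\sigma = -1$). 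This avoids Laudenbach--Poenaru altogether, at the cost of invoking $\pi_0\mathrm{Diff}(Y)\cong\mathrm{Out}(\pi_1(Y))$; both arguments ultimately rest on the same Waldhausen-type input, namely that homotopic diffeomorphisms of the Haken manifold $Y$ are isotopic.

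Three caveats. First, your claim that $\mathrm{Diff}_0(Y)$ is contractible is false: for a closed Haken Seifert-fibered 3-manifold whose fundamental group has infinite cyclic centre, $\mathrm{Diff}_0(Y)$ is homotopy equivalent to $S^1$ (the fiberwise rotations), by Hatcher--Ivanov. Fortunately you only need the $\pi_0$-level statement (homotopy implies isotopy), which is exactly Waldhausen's theorem for Haken manifolds, and your alternative citation covers this, so the error does not propagate. Second, the identification of $\bar\sigma^*Y$ with $Y$ when $\det\bar\sigma=-1$ is not an isomorphism of principal $S^1$-bundles but an anti-equivariant one (equivariant with respect to inversion on $S^1$); as you implicitly use, it still induces a diffeomorphism of the associated disc bundles via a fiberwise reflection, so the argument stands, but the phrasing should be corrected. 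Third, you silently skip the uniqueness of the truncation step: the paper observes that any two truncation levels of the cusp $Y\times[0,\infty)$ differ by a translation in the $[0,\infty)$-direction, hence are smoothly isotopic, so the truncated manifold is well defined up to diffeomorphism; this one-line step should be included for completeness, since the proposition quantifies over both choices.
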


\begin{proof}
Recall that Dehn filling a torus-like cusp requires two steps: first we truncate the cusp, creating a boundary component $Y$, and then we glue onto the resulting boundary component a disc bundle $D$ over $T^2$ whose boundary is $-Y$. At each step we are making a choice, and we need to show that the diffeomorphism type of the Dehn filling is independent of these choices.

When we truncate the cusp, we are choosing the level at which we are truncating the end $Y \times [0,\infty)$. This choice is restricted by the complex-hyperbolic metric, so that every two choices differ by a translation in the $[0,\infty)$-direction. In particular, any two choices are smoothly isotopic, and it follows that the truncated manifold is unique up to diffeomorphism.

We now need to show that the gluing diffeomorphism does not affect the result of the filling, either. In order to see that, we view the disc bundle $D$ as the union of a (fibered) neighborhood $F$ of a fiber and its complement $H$. Therefore, gluing $D$ amounts to first gluing $F$ and then gluing $H$. We view $F$ as a 4-dimensional 2-handle: its gluing data is specified by the attaching curve (a simple closed curve in $Y$) and a framing. However, every self-diffeomorphism of a 3-dimensional nilmanifold preserves the $S^1$-fibration up to isotopy~\cite{WaldhausenI, WaldhausenII}, so we can choose the attaching curve to be a fiber. The framing, too, is determined by the fibration. Now, $H$ is a disk bundle over a torus minus a disk, so it is a 4-dimensional 1-handlebody. Laudenbach and Poenaru have shown that each self-diffeomorphism of the boundary of $H$ extends to $H$~\cite{LaudenbachPoenaru}, so the gluing diffeomorphism of $Y$ extends to $D$, as claimed.
\end{proof}

%In this case, we also say that $\overline X$ is a \emph{toroidal compactification} of $X$, and we call $\overline X \setminus X$ the \emph{compactifying divisor}

Given this general set-up, the hope was then to be able to at least replicate some of Anderson's arguments in order to produce new classes of Einstein manifolds in dimension four. Theorems~\ref{main-compact} and~\ref{main-noncompact} above show that such a plan is in general doomed for topological reasons. With that said, such a construction is not obstructed up to a finite cover, and it is tantalizing to ask whether a Dehn filling compactification can be performed up to a finite cover, and if the hypothetical Einstein metric coincides with the K\"ahler--Einstein metric constructed by Yau. Recall that up to a finite cover any finite-volume complex-hyperbolic surface with cusps admits a smooth toroidal compactification with ample canonical class, see \cite[Theorem A]{DiCerbo}.

	\section{Einstein metrics on $4$-manifolds, Hitchin--Thorpe-type inequalities, and $L^2$-characteristic numbers}\label{mainthm}
	
	In this section, we recall some results concerning Einstein metrics in dimension $4$ and $L^2$-characteristic numbers.
	We follow the notation and curvature normalization adopted in LeBrun's survey paper~	\cite{LeB99}. We also refer to~\cite{LeB99} as a general reference for the geometry and topology of $4$-manifolds and Einstein metrics.
	
	The Gauss--Bonnet formula for the Euler characteristic of a closed (oriented) $4$-manifold $(M, g)$ is given by
	\begin{align}\label{euler}
		\chi(M) = \frac{1}{8\pi^2} \int_{M} \Big(|W^+|_g^2 +|W^-|_g^2+\frac{s_g^2}{24}-\frac{|\overset{\circ}{\Ric}|_g^2}{2}\Big) d\mu_g,
	\end{align}
	where $W^{\pm}$ are the self-dual and anti-self-dual Weyl curvatures, $s_g$ is the scalar curvature, and $\overset{\circ}{\Ric}$ is the trace-free part of the Ricci tensor.
	Observe that $g$ is Einstein if and only if $\overset{\circ}{\Ric}=0$. Also, by the Hirzebruch signature theorem we can express the signature of $(M, g)$ as a curvature integral:
	\begin{align}\label{signature}
	\tau(M)=\frac{1}{12\pi^2}\int_{M}\big(|W^+|_g^2-|W^-|_g^2\big)d\mu_g.
	\end{align}
	
	Next, we recall the Hitchin--Thorpe inequality.
	
	\begin{theorem}[{\cite{Hit74}}]\label{hitchin}
		Let $(X, g)$ be a compact orientable Einstein $4$-manifold with signature $\tau$ and Euler characteristic $\chi$. Then
		\[
	     \chi\geq\frac{3}{2}|\tau|.
		\]
		Furthermore, if equality occurs then either $X$ is flat or the universal cover of $X$ is a $K3$ surface (up to orientation reversal).
	\end{theorem}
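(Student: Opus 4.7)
The plan is to leverage the two curvature integral formulas \eqref{euler} and \eqref{signature} in tandem, using the Einstein condition to kill the only negative term in the Gauss--Bonnet integrand. Since $g$ is Einstein, we have $\overset{\circ}{\Ric}=0$, so \eqref{euler} simplifies and all the terms that appear are non-negative. I would then form the two linear combinations $\chi(X)\pm\frac{3}{2}\tau(X)$, each of which becomes a manifestly non-negative integral, yielding the desired inequality.

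Concretely, after the Einstein simplification of \eqref{euler} and a rescaling of \eqref{signature} by $3/2$, adding or subtracting the two identities gives
\[
\chi(X)\mp\frac{3}{2}\tau(X)=\frac{1}{8\pi^2}\int_X\Big(2|W^{\mp}|_g^2+\frac{s_g^2}{24}\Big)\,d\mu_g.
\]
Since the integrand on the right is pointwise non-negative, both expressions are $\geq 0$, which is exactly $\chi(X)\geq\frac{3}{2}|\tau(X)|$.

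For the rigidity statement, I would assume without loss of generality that $\chi(X)=\frac{3}{2}\tau(X)$. The identity above then forces $s_g\equiv 0$ and $W^-\equiv 0$ pointwise. Combined with the Einstein equation $\Ric_g=\frac{s_g}{4}g$, this yields a Ricci-flat, half-conformally-flat metric. If $W^+\equiv 0$ as well, all of the curvature vanishes and $X$ is flat. Otherwise, the restricted holonomy must sit inside $SU(2)\cong Sp(1)$ by Berger's holonomy classification (the existence of a non-trivial anti-self-dual parallel spinor for a Ricci-flat self-dual metric forces a hyperk\"ahler reduction of holonomy), so the universal cover $\widetilde X$ is a compact simply connected hyperk\"ahler $4$-manifold. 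By the classification of such manifolds, $\widetilde X$ is diffeomorphic to a $K3$ surface; the opposite equality $\chi(X)=-\frac{3}{2}\tau(X)$ is handled by reversing orientation, which exchanges $W^+$ and $W^-$.

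The only step that is not a one-line algebraic manipulation is the rigidity part: the passage from the curvature conditions ($s_g\equiv 0$, $W^-\equiv 0$, Einstein) to the topological identification of the universal cover as $K3$ requires invoking Berger's holonomy theorem (or Hitchin's original spinorial argument) together with the classification of compact hyperk\"ahler four-manifolds. This is where most of the work in Hitchin's original paper~\cite{Hit74} is spent, and I would simply cite it rather than reproduce the argument here.
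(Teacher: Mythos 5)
Your proposal is correct and follows essentially the same route as the paper: the inequality is obtained exactly as indicated there, by combining the Gauss--Bonnet formula~\eqref{euler} and the signature formula~\eqref{signature} under the Einstein condition $\overset{\circ}{\Ric}=0$ (the observation attributed to Thorpe), and your algebra and sign conventions check out, including that the K3 case arises with reversed orientation. For the equality case you, like the paper, defer the substantive work (holonomy reduction to $SU(2)$, compactness of the universal cover, and the identification with K3) to Hitchin's original argument in~\cite{Hit74}, which is exactly what the paper does.
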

	
	The proof of the inequality in Theorem~\ref{hitchin} follows easily by combining Equations~\eqref{euler} and~\eqref{signature}, and this was originally observed in \cite{Thorpe}. The characterization of the equality case in such an inequality is more delicate, and we refer to~\cite{Hit74} for a beautiful proof of this fact.
	
	The formulas given in Equations~\eqref{euler} and~\eqref{signature} admit some interesting generalizations to the complete finite-volume setting. These equations will be used in Section~\ref{alternative}, where we prove Theorem~\ref{main-noncompact}. First, Equation~\eqref{euler} generalizes to the finite-volume setting with bounded curvature: this follows from the Gauss--Bonnet-type formula due to Cheeger and Gromov~\cite{CG85} (see also Harder \cite{Har71}):
	\begin{align}\label{eulerl2}
	\chi_{L^2}(M):=\frac{1}{8\pi^2}\int_{M}\Big(|W^+|_g^2
	+|W^-|_g^2+\frac{s_g^2}{24}-\frac{|\overset{\circ}{\Ric}|_g^2}{2}\Big)d\mu_g=\chi(M).
	\end{align}
	In this setting, one may also consider the $L^2$-curvature integral analogous to Equation~\eqref{signature}:
	\begin{align}\label{signaturel2}
	\tau_{L^2}(M)=\frac{1}{12\pi^2}\int_{M}\big(|W^+|_g^2-|W^-|_g^2\big)d\mu_g.
	\end{align}
	Despite the apparent similarity with Equation~\eqref{signature}, this finite curvature integral has no immediate topological interpretation in general. However, if the ideal boundary of $(M, g)$ is a finite collection of fibered cusps, with each fibration being a circle bundle over a surface, we do have a topological interpretation of Equation~\eqref{signaturel2} by keeping into account the limit $\eta$-invariant of each cusp. More precisely, we have the following formula, due to Dai and Wei (see~\cite[Page~568]{DW07}):
	%\MG{Non c'\`e bisogno di mettere ``Novikov'': le 4-variet\`a di tipo finito hanno sempre una ``buona'' segnatura, senza bisogno di ipotesi di compattezza o chiusura. Ho aggiunto qualche frase sotto. Tra l'altro, sei sicuro dei segni?}
	\begin{align}\label{signature2}
	\tau_{L^2}(M)=\tau(M)+\sum_{E \in \Ends(M)}\frac{e(E)-3\sign(e(E))}{3},
	\end{align}
	where $\tau(M)$ is again the signature of $M$, and where $e(E)$ is the Euler number of the circle bundle associated to $E\in \Ends(X)$. Recall that in order for the signature to be defined one just needs that $M$ is oriented and that $H_2(M;\IZ)$ is finitely generated. (Note that $H_2(M;\IZ)$ is automatically finitely generated for a complex-hyperbolic manifold of finite volume.) Under these assumptions, $H_2(M;\IZ)/{\rm Tor}$ is a free Abelian group of finite rank, and the intersection product induces on it a (possibly degenerate) bilinear form: given two classes $A, B \in H_2(M;\IZ)/{\rm Tor}$, we can represent them by two transversely embedded compact surfaces, $F_A$ and $F_B$; the intersection $Q_M(A,B)$ is the count of their intersections with the sign determined by the orientation. Then we define $\tau(M)$ to be the signature of $Q_M$, i.e., the number of positive eigenvalues minus the number of negative eigenvalues of $Q_M$. (When $M$ is non-compact or has non-empty boundary, $Q_M$ could have non-degenerate kernel.)
	
	By combining these formulas, we have the following generalization of the Hitchin--Thorpe inequality.

	%\MG{Changed the notation for the statement, so that we don't have to number the ends. Old statement commented.}
	\begin{theorem}[{\cite{DW07}}]\label{dai}
		Let $(X, g)$ be a non-compact, complete Einstein $4$-manifold whose ideal boundary is a finite collection of fibered cusps, with each fibration being a circle bundle over a surface. We then have
		\[
		\chi(X)\geq \frac{3}{2} \left| \tau(X) + \sum_{E \in \Ends(X)} \frac{e(E) - 3\sign(e(E))}3\right|
		\]
		where, as above, $e(E)$ is the Euler number of the circle bundle associated to $E$. 
		Moreover, equality holds if and only if $(X, g)$ is a complete Calabi--Yau manifold.
	\end{theorem}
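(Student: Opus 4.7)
The plan is to execute the classical Hitchin--Thorpe argument of Theorem~\ref{hitchin} in the $L^2$-setting, replacing \eqref{euler} and \eqref{signature} by their finite-volume analogues \eqref{eulerl2} and \eqref{signaturel2}, and then using the Dai--Wei formula \eqref{signature2} to convert the $L^2$-signature into the topological quantity that appears on the right-hand side of the claimed inequality. The inequality thus reduces to a Hitchin--Thorpe-type bound relating $\chi(X)$ and $\tau_{L^2}(X)$, whose proof is formally identical to the compact case once one knows that the two relevant curvature integrals are finite and compute the correct topological quantities.

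Carrying this out, I would first invoke the Einstein hypothesis to set $\overset{\circ}{\Ric}\equiv 0$ in \eqref{eulerl2}, so that $\chi(X)=(8\pi^2)^{-1}\int_X(|W^+|_g^2+|W^-|_g^2+s_g^2/24)\,d\mu_g$. Combining this with \eqref{signaturel2}, the linear combinations $2\chi(X)\pm 3\tau_{L^2}(X)$ simplify to $(4\pi^2)^{-1}\int_X(2|W^\pm|_g^2+s_g^2/24)\,d\mu_g$, each of which is manifestly non-negative. This yields $\chi(X)\geq \frac{3}{2}\,|\tau_{L^2}(X)|$, and substituting \eqref{signature2} in place of $\tau_{L^2}(X)$ then produces the claimed inequality.

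For the equality case, vanishing of one of $2\chi(X)\pm 3\tau_{L^2}(X)$ forces the corresponding integrand to vanish pointwise on $X$, hence $s_g\equiv 0$ and either $W^+\equiv 0$ or $W^-\equiv 0$. The Einstein condition then promotes $s_g\equiv 0$ to $\Ric_g\equiv 0$, so $(X,g)$ is a complete Ricci-flat Riemannian four-manifold whose Weyl tensor is either anti-self-dual or self-dual. In real dimension four this constrains the restricted holonomy to sit inside $\mathrm{SU}(2)$, so $g$ is locally hyperk\"ahler; completeness then identifies $(X,g)$ as a complete Calabi--Yau surface, and the converse direction is immediate from the curvature characterization of such metrics.

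The step I expect to be the main obstacle is not the algebraic manipulation described above, but the analytical justification for applying \eqref{eulerl2} and \eqref{signature2} in our geometric setting. Concretely, one needs uniform control on curvature and volume at each fibered-cusp end in order to invoke Cheeger and Gromov's $L^2$-Gauss--Bonnet formula, together with the asymptotic identification of the limit $\eta$-invariant of each cusp with $(e(E)-3\sign(e(E)))/3$. These are the technical hearts of \cite{CG85} and \cite{DW07} respectively, but in our situation they are applicable essentially for free, since a complete Einstein metric modelled on fibered cusps automatically has bounded curvature and finite volume; in practice one is verifying the hypotheses of these theorems rather than re-proving them.
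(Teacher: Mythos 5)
Your proposal takes essentially the same route as the paper: Theorem~\ref{dai} is quoted from \cite{DW07}, and the paper's stated derivation is precisely to combine the Cheeger--Gromov $L^2$ Gauss--Bonnet formula \eqref{eulerl2}, the $L^2$-signature integral \eqref{signaturel2}, and the Dai--Wei identity \eqref{signature2} via the compact Hitchin--Thorpe algebra, deferring the analytic verification at the cusps and the delicate equality case to \cite{CG85} and \cite{DW07}, exactly as you do. Your linear combinations $2\chi(X)\pm 3\tau_{L^2}(X)$ and the equality-case argument (scalar-flat plus half-conformally-flat Einstein forces restricted holonomy in $\mathrm{SU}(2)$, hence a complete Calabi--Yau metric) match the standard argument behind the cited result.
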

	 
%	\begin{theorem}[{\cite{DW07}}]\label{dai}
%		Let $(X, g)$ be a non-compact, complete Einstein $4$-manifold whose ideal boundary is a finite collection of fibered cusps, with each fibration being a circle bundle over a surface. We then have
%		\[
%		\chi(X)\geq \frac{3}{2} \left| \tau(X) - \sum_{i} \frac{1}{3}e_i + \sign(e_i)\right|
%		\]
%		where $e_i$ is the Euler number of the circle bundle associated to the $i^{\rm th}$ end. 
%		Moreover, equality holds if and only if $(M, g)$ is a complete Calabi--Yau manifold.
%	\end{theorem}

	 \section{Non-Einstein Dehn filling compactifications}\label{part1}

	We start by describing Hirzebruch's example~\cite{Hir84} of a complex-hyperbolic surface of finite volume with a smooth toroidal compactification of Kodaira dimension zero, whose cusps all have Euler number $1$. We will then tinker around with Hirzebruch's example to produce, for each $n > 0$, a complex-hyperbolic surface that has the same property, except that the cusps have all Euler number $e$.
	
	\subsection{Hirzebruch's example(s)}
	Let $\zeta = e^{\frac{\pi i}{3}}$ and $E_{\zeta} = \IC/\Lambda_\zeta$ be the elliptic curve associated to the lattice $\Lambda_\zeta = \IZ[1, \zeta]$. Consider the Abelian surface $A = E_\zeta \times E_\zeta$, with coordinates $(z,w)$. Since $E_\zeta$ is an elliptic curve with complex multiplication, the following four elliptic curves in $A$ are defined:
	\begin{align}\label{configuration}
	C_0 = \{w=0\}, \quad C_\infty = \{z=0\}, \quad C_1 = \{w=z\},\quad C_{\zeta} = \{w=\zeta z\}.
	\end{align}
	The four elliptic curves in Equation~\eqref{configuration} intersect transversely only in the point $(0, 0)\in A$. Moreover, by adjunction we know that each of them has self-intersection $0$. Blow up $A$ at $(0,0)$ to obtain $X$, and let $D$ be the proper transform of $C_0 \cup C_\infty \cup C_1 \cup C_\zeta$. we obtain a pair $(X, D)$ where $X$ is a blown-up Abelian surface and $D$ is a divisor consisting of four smooth disjoint elliptic curves, that we call $D_1$, $D_2$, $D_3$, $D_4$.
	
	The surface $X$ is a smooth toroidal compactification of $X\setminus D$, which in turn can be shown to be complex-hyperbolic by Tian and Yau's uniformization theorem~\cite{TY87}. It suffices to compare $\sum_i D_i^2$ with $3c_2(X) - c_1^2(X)$, if one has equality in the (logarithmic) Bogomolov--Miyaoka--Yau (log-BMY, for short) inequality
	\begin{equation}\label{log-BMY}
	-\sum_i D_i^2 \le 3c_2(X) - c_1^2(X),
	\end{equation}
	then $X \setminus \cup_iD_i$ is complex-hyperbolic.
	In our case, the left-hand side is $-4\cdot(-1)$, while the right-hand side is $3\cdot 1 - (-1)$, so we have equality, and $X\setminus \cup_iD_i$ is complex-hyperbolic, as claimed.
	By construction, $X \setminus D$ has four torus-like cusps, all with Euler number 1.
	
	In~\cite{Hir84}, Hirzebruch also consider $n^2$-fold covers of the surface $X$ we constructed above, in which he finds $4n^2$ elliptic curves, each of self-intersection $-n^2$, and one easily check that their complement again satisfies equality in Equation~\eqref{log-BMY}, and thus the complement of all these curves is a complex-hyperbolic surface with all cusps of Euler number $n^2$.
	
	\begin{remark}
	Hirzebruch then uses these surfaces and these divisors to produce compact complex surfaces approaching the equality in the (non-logarithmic) BMY inequality. We will not need this part of his paper, but just the most basic example.
	\end{remark}
	 
	\subsection{Playing with Hirzebruch's example}
	We want to tweak Hirzebruch's example from the previous subsection to obtain, for each $e > 0$, a complex-hyperbolic surface whose cusps all have Euler number $e$. (As mentioned above, Hirzebruch himself had already found such examples, when $e$ is a square.) In fact, we will construct such examples with four cusps.
	 
	We will present a covering argument in slightly broader generality.
	%\MG{Forse vogliamo che $A$ sia una superficie abeliana---quello che ci serve ``e una propriet\`a stabile per rivestimenti \'etale, e non so se essere un prodotto lo sia. (e magari non ci serve comunque che lo sia.)}
	Consider a pair $(A,C)$ where $A$ is a product of two tori $E \times E'$ and $C$ is the union of four tori $C_1, C_2, C_3, C_4$, such that $C_j \cdot C_k \neq 0$ whenever $j\neq k$.
	 
	\begin{lemma}\label{l:primecover}
		For each prime $p$ there exists a $p$-fold \'etale cover $\pi \colon A' \to A$ such that $C_j' := \pi^{-1}(C_j)$ is connected for $j = 1,\dots, 4$. For such a cover $\pi$, $C'_j \cdot C'_k = p \cdot (C_j \cdot C_k)$.
	\end{lemma}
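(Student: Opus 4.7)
The plan is to translate the lemma into a linear algebra question over $\IF_p$. Connected $p$-fold \'etale covers of the abelian surface $A$ correspond bijectively to index-$p$ sublattices of $\pi_1(A) \cong \IZ^4$, or equivalently to non-zero elements $\phi \in V^* := \Hom(\pi_1(A), \IF_p)$ up to $\IF_p^\times$-scaling. For the cover $\pi \colon A' \to A$ associated with $\phi$, standard covering-space theory tells us that $\pi^{-1}(C_j)$ is connected if and only if $\phi|_{\pi_1(C_j)} \ne 0$ (equivalently, surjective onto $\IF_p$). So the task reduces to exhibiting a non-zero $\phi \in V^*$ with $\phi|_{\pi_1(C_j)} \ne 0$ for $j=1,\ldots,4$.

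First I would verify that $W_j := \pi_1(C_j) \otimes_\IZ \IF_p$ injects into $V := \pi_1(A) \otimes_\IZ \IF_p \cong \IF_p^4$ as a genuine $2$-dimensional subspace. Every $1$-dimensional complex subtorus of an abelian surface is a translate of an abelian subvariety, so $\pi_1(C_j)$ is a saturated rank-$2$ sublattice of $\pi_1(A)$; consequently $\pi_1(A)/\pi_1(C_j)$ is torsion-free, the sequence $0 \to \pi_1(C_j) \to \pi_1(A) \to \pi_1(A)/\pi_1(C_j) \to 0$ stays exact after tensoring with $\IF_p$, and hence $\dim_{\IF_p} W_j = 2$.

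The non-existence of a good $\phi$ would mean $V^* = \bigcup_{j=1}^{4} W_j^\perp$. Each annihilator satisfies $|W_j^\perp| = p^2$ and contains $0$, so a crude inclusion-exclusion yields $\big|\bigcup_{j} W_j^\perp\big| \le 4p^2 - 3$, which is strictly less than $|V^*| = p^4$ for every prime $p$. Hence a good $\phi$ exists, and I use it to construct the desired cover. The main obstacle in this counting is the case $p = 2$: the naive bound $4p^2 \le p^4$ fails with equality, and the $-3$ correction coming from the common zero of the annihilators is essential ($13 < 16$).

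For the intersection formula, since $\pi$ is \'etale of degree $p$ and $\pi^{-1}(C_j) = C'_j$ is connected, $\pi^*[C_j] = [C'_j]$ in $H^2(A'; \IZ)$. The projection formula then gives
\[
C'_j \cdot C'_k \;=\; \int_{A'} \pi^*\bigl([C_j] \cdot [C_k]\bigr) \;=\; \deg(\pi) \cdot (C_j \cdot C_k) \;=\; p \cdot (C_j \cdot C_k).
\]
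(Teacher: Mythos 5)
Your proof is correct and takes essentially the same approach as the paper: both reduce the lemma to finding $\phi \in \Hom(H_1(A;\IZ),\IF_p)$ that restricts non-trivially to each $H_1(C_j;\IF_p)$ and produce one by counting, your estimate $4p^2-3 < p^4$ in the dual space being the unprojectivized version of the paper's count of hyperplanes ($4(p+1) < p^3+p^2+p+1$), with both computations surviving the critical case $p=2$. The only notable difference is that you justify $\dim_{\IF_p} W_j = 2$ via saturation of $\pi_1(C_j)$ in $\pi_1(A)$ (each $C_j$ being a translate of a subtorus), which is if anything slightly more robust than the paper's appeal to the pairwise intersection hypothesis, and your degree argument for the intersection formula matches what the paper leaves as ``obvious by degree considerations.''
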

	
	\begin{proof}
	Let $\IF_p$ the field with $p$ elements. Since each $C_j$ is a torus and every two of them intersect non-trivially, $P_j := H_1(C_j;\IF_p) \subset H_1(A;\IF_p)$ is a $2$-plane in $\IF_p$, and $P_j \cap P_k = \{0\}$ whenever $j\neq k$.
	%\MG{Corretto $P_j \cap P_k$ invece di $P_j \cap P_j$.}
	 	
	In order to prove the lemma, we need to find a surjective homomorphism $\phi \in \Hom(H_1(A;\IZ), \IF_p) \cong H^1(A;\IF_p) \cong \IF_p^4$ such that $\ker \phi \cap P_j$ is 1-dimensional for each $j$. To see that this is sufficient, fix one of the divisors, $C_j$. Since $\ker \phi$ does not contain all of  $P_j$, there is a curve $\gamma \subset C_1$ such that $\phi([\gamma]) \neq 0 \in \IF_p$. Since $p$ is a prime, $\phi([\gamma])$ generates $\IF_p$, so $\gamma$ lifts
	%\MG{it lifts $\to$ $\gamma$ lifts}
	to a single simple closed curve in $\pi^{-1}(X)$, and in particular $\pi^{-1}(C_j)$ is connected.
	 	
	%We now look for the subspace $\ker \phi$, which will give $\phi$ by passing to the quotient. We need to find a hyperplane in $H_1(A;\IF_p) \cong \IF_p^4$ that does not contain any of $P_1, \dots, P_4$. Let us count hyperplanes in $H_1(X;\IF_p)$ that \emph{do} contain at least one of them. In fact, since $P_j \cap P_k = \{0\}$, no hyperplane can contain both $P_j$ and $P_k$. Since $P_j$ is a codimension-2 subspace, there are exactly $p+1$ hyperplanes containing it (here $p+1$ is the cardinality of $\IP^1_{\IF_p}$, which parametrizes all such hyperplanes). Thus, there are $4(p+1)$ hyperplanes containing one among $P_1,\dots,P_j$. However, the set of hyperplanes in $H_1(A;\IF_p)$ comprises $p^3+p^2+p+1$ elements (it is a finite projective space, namely $\IP^3_{\IF_p}$).
	
	%\MG{Molto pi\`u esplicito quello che sto dicendo, ma comunque $\phi$ non lo costruisco esplicitamente.}
	Up to scalars, a homomorphism $\phi \colon H_1(A;\IZ) \to \IF_p$ is determined by its kernel, $\ker \phi$. Conversely, if we find a hyperplane $K \subset  H_1(A; \IF_p)$ such that $K \cap P_j$ is 1-dimensional for each $j$, then there exists $\phi_K \in H^1(A;\IF_p)$ such that $\ker \phi = K$, which is the homomorphism we required.
	
	Instead of exhibiting such a hyperplane $K$, we will non-constructively show that there exists one by counting: we will prove that the number of hyperplanes in $H_1(A; \IF_p)$ such that $K \cap P_j$ is \emph{not} 1-dimensional for at least an index $j \in \{1,2,3,4\}$ is strictly smaller than the number of hyperplanes in $H_1(A; \IF_p)$. (The total number of hyperplanes in $H_1(A; \IF_p)$ is the cardinality of the projective space $\IP(H^1(A; \IF_p)) \cong \IP^3_{\IF_p}$, which is $p^3+p^2+p+1$.)
	
	Let us now consider a hyperplane $K \subset H_1(A; \IF_p)$. Since $K$ is a hyperplane and $P_j$ is a 2-plane, $1\le \dim(K \cap P_j) \le 2$. Moreover, since $P_j \cap P_k = \{0\}$ whenever $j \neq k$, for a given hyperplane $K$ there is at most an index $j$ such that $\dim(K \cap P_j) = 2$, or, equivalently, such that $P_j \subset K$.
	
	The number of hyperplanes $K$ containing $P_j$ for a fixed index $j$ is $p+1$: containing the 2-plane $P_j$ means imposing two linear conditions in the projective space $\IP(H^1(A; \IF_p))$, so the set of hyperplanes containing $P_j$ is a projective line in $\IP(H^1(A; \IF_p))$, which is a subset of cardinality $\# \IP^1_{\IF_p} = p+1$.
	
	Therefore, there are $4(p+1)$ hyperplanes containing one among $P_1,\dots,P_j$. Since
		\[
		p^3+p^2+p+1 - 4(p+1) = p^3+p^2-3p-3 = (p^2-3)(p+1) > 0
		\]
	for every $p > 1$, there is at least a hyperplane $K$ in $H_1(A; \IF_p)$ that does not contain any among $P_1, \dots, P_4$, so in particular $\dim(K \cap P_j) = 1$ for $j = 1,\dots, 4$, as required.
	 	
	The second part of the statement is obvious by degree considerations.
	\end{proof}
	 
	\begin{corollary}\label{c:cover}
		For every integer $e > 1$ there exists an $e$-fold \'etale cover of $A$ such that the lifts of $C_1, \dots, C_4$ are all connected.
	\end{corollary}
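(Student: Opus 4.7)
The plan is to derive Corollary~\ref{c:cover} from Lemma~\ref{l:primecover} (and its proof) by building a cyclic $e$-cover whose classifying homomorphism can be assembled from mod-$p$ data, one prime at a time. The key observation is that a connected cyclic \'etale cover $\pi \colon A' \to A$ of degree $e$ corresponds bijectively to a surjection $\phi \colon H_1(A; \IZ) \to \IZ/e\IZ$, and that the pullback $\pi^{-1}(C_j)$ is connected precisely when $\phi|_{P_j}$ is surjective, where $P_j := H_1(C_j; \IZ) \subset H_1(A; \IZ)$. It therefore suffices to exhibit such a $\phi$ for each $e > 1$.

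Factor $e = p_1^{a_1} \cdots p_r^{a_r}$ and use $\IZ/e\IZ \cong \prod_i \IZ/p_i^{a_i}\IZ$ to reduce, via the Chinese Remainder Theorem, to constructing for each index $i$ a surjection $\phi_i \colon H_1(A; \IZ) \to \IZ/p_i^{a_i}\IZ$ whose restriction to every $P_j$ is surjective. Fix $p = p_i$ and $a = a_i$. The proof of Lemma~\ref{l:primecover} produces a homomorphism $\bar\phi \colon H_1(A; \IZ) \to \IF_p$ whose restriction to each $P_j$ is surjective. Since $H_1(A; \IZ) \cong \IZ^4$ is free, $\bar\phi$ lifts to a homomorphism $\phi_i \colon H_1(A; \IZ) \to \IZ/p^a\IZ$ with $\phi_i \equiv \bar\phi \pmod p$. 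The image $\phi_i(P_j) \subset \IZ/p^a\IZ$ reduces onto $\IF_p$, hence contains a unit of $\IZ/p^a\IZ$, hence equals all of $\IZ/p^a\IZ$; the same reasoning shows $\phi_i$ is itself surjective. Assembling the $\phi_i$ via CRT yields the desired $\phi$.

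The step I expect to require most care is making sure that the prime-power case really only needs a single invocation of Lemma~\ref{l:primecover}. The naive alternative of iterating the lemma $a_i$ times with the same prime $p_i$ is awkward, because after a single $p_i$-fold cover the lifted curves have pairwise intersection numbers divisible by $p_i$, so the subspaces $P_j \subset H_1(A'; \IF_{p_i})$ are no longer in direct sum and the hyperplane-count in the proof of Lemma~\ref{l:primecover} must be reworked. Handling all $p_i$-primary data at once by a single lift of $\bar\phi$ from $\IF_p$ to $\IZ/p^a\IZ$ neatly bypasses this bookkeeping.
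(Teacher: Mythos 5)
Your proof is correct, and it takes a genuinely different route from the paper's. The paper proves Corollary~\ref{c:cover} by induction on the number of (repeated) prime factors of $e$, applying Lemma~\ref{l:primecover} as a black box to successive covers: an $\frac{e}{p}$-fold cover exists by induction, the lifted configuration still satisfies the lemma's stated hypotheses, and one more application of the lemma finishes. The resulting $e$-fold cover is a tower of prime-degree covers, not a priori cyclic; cyclicity is only recovered afterwards, in a remark, via a separate argument through an $e^4$-fold cover. You instead work once and for all in $H^1$: connectivity of the lifts is equivalent to surjectivity of $\phi|_{P_j}$ for a classifying surjection $\phi \colon H_1(A;\IZ) \to \IZ/e\IZ$; the Chinese Remainder Theorem reduces to prime powers (coordinatewise surjectivity suffices here because the factors have coprime orders); and freeness of $H_1(A;\IZ)$ lets you lift the mod-$p$ homomorphism furnished by the proof of Lemma~\ref{l:primecover} to $\IZ/p^a\IZ$, where a subgroup surjecting mod $p$ must be everything. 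This produces a cyclic $e$-fold cover in a single step, so your argument also subsumes the paper's remark. Your closing caveat about iterating with a repeated prime is pertinent --- the paper does exactly that, and you are right that after a $p$-fold cover one can have $p \mid C'_j \cdot C'_k$, in which case the claim $P_j \cap P_k = \{0\}$ inside the proof of Lemma~\ref{l:primecover} genuinely fails --- but it is slightly overstated: the hyperplane count needs no real reworking, since the union bound of at most $4(p+1)$ bad hyperplanes only requires each $P_j$ to be a $2$-plane (automatic, as the lifted curves are embedded subtori, so their first homology is primitive in that of the cover), and only the statement that a given hyperplane contains at most one $P_j$ must be dropped. So the paper's induction is sound with its stated hypothesis $C_j \cdot C_k \neq 0$, while your construction sidesteps the issue entirely; what the paper's route buys in exchange is that it never needs to leave the language of covers and reapplies the lemma verbatim.
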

	 
	\begin{proof}
		We argue by induction on the number of (non-distinct) prime factors of $e$; the base case is when $e$ is prime, and this was proved in the previous lemma. If $e$ is not prime, choose a prime $p$ dividing it. By the previous lemma and by the inductive assumption, there is an $\frac{e}p$-fold cover $A'$ of $A$ in which $C$ lifts to four tori $C'$. Note that $C'$ still satisfies the assumptions of Lemma~\ref{l:primecover}. By Lemma~\ref{l:primecover}, there is a $p$-fold cover $A''$ of $A'$ in which $C'$ lifts to four tori $C''$. Now $(A'', C'')$ is the required cover.
	\end{proof}
	 
	\begin{remark}
		With a little more effort one can find a \emph{cyclic} cover in Corollary~\ref{c:cover}; it suffices to take a $e^4$-fold cover $A'' \to A$, as given by the corollary, and then observe that this corresponds to a subgroup $H \subset H_1(X;\IZ)$ whose quotient $G = H_1(X;\IZ)/H$ has order $e^4$. Since $H_1(A;\IZ) \cong \IZ^4$, $G$ is generated by (at most) four elements, one of which has to have order divisible by $e$. In particular, $G$ has itself a cyclic
		%\MG{Aggiunto `\emph{cyclic} quotient $G'$', che mi sa che era quello che volevo dire}
		quotient $G'$ of order exactly $e$. Composing the maps $H_1(X;\IZ) \to G \to G'$ gives the desired cover.
	\end{remark}
	 
	%\MG{By the way, I'm not particularly attached to the $^{(e)}$ notation. Maybe we should just call $C^0, C^1, C^\zeta, C^\infty$ the first four, and then use normal indices for the rest.}
	%\MG{Ok, I decided to remove the brackets from $A^{(e)}$ and I also corrected a couple of $n$s that were still there (instead of $e$s).}
	Let us now start with Hirzebruch's four tori $C_0, C_\infty, C_1, C_\zeta \subset A = E_\zeta \times E_\zeta$. Call $(A^{1}, C^{1}) := (A, C_0 \cup C_\infty \cup C_1 \cup C_\zeta)$. Fix a positive integer $e$. By the corollary above, there exists an $e$-fold \'etale cover $\pi \colon A^{e} \to A^{1}$ such that $C^{e} := \pi^{-1}(C^{1})$ is the union of four elliptic curves, each with self-intersection $0$, and such that each two of them intersect pairwise $e$ times. Moreover, since $\pi$ is an \'etale cover, all $e$ intersection points are quadruple points where four distinct tori meet transversally.
	%\MG{Aggiunte due frasi sui numeri caratteristici di $A^e$.}
	The $4$-manifold $A^{e}$ is an \'etale cover of the 4-torus $A^1$, so it is itself a 4-torus, therefore $c_2(A^e) = 0$ and $\tau(A^e) = 0$. The complex surface $A^e$ has first Chern class which is the pull-back of $c_1(A^1)$, so $c_1(A^e) = \pi^*(c_1(A^1)) = \pi^*(0) = 0$.
	 
	%\MG{Aggiunti dettagli dei conti, tipo cosa succede quando scoppi.}
	Blow up at these $e$ intersection points, to obtain $\tilde A^e$. Since $\chi(A^{e}) = 0$, $c_1^2(A^{e}) = 0$, and $\tau(A^{e}) = 0$, and since each blow-up increases $c_2$ by $1$, decreases $c_1^2$ by $1$ (see, for instance,~\cite[Theorem~I.9.1]{BHPV}) and $\tau$ by $1$, we have that $c_2(\tilde A^e) = \chi(\tilde A^e) = e$, $c_1^2(\tilde A^e) = -e$, and $\tau(\tilde A^e) = -e$. The proper transform $D^e$ of $C^{e}$ consists of four disjoint tori $D^e_{1},\dots, D^e_{4}$, each of self-intersection $-e$, and in particular:
	\[
	-\sum_j D^e_{j}\cdot D^e_{j} = 4e = 3e - (-e) = 3c_2(\tilde A^e) - c_1^2(\tilde A^e),
	\]
	so $X_e := \tilde A^e \setminus D^e$ is a complex-hyperbolic surface whose cusps all have Euler number $e$.
	
	\subsection{The proof of Theorem~\ref{main-compact}}
	
	The following proposition directly implies Theorem~\ref{main-compact}.
	
	\begin{proposition}
	%\MG{new statement and modified proof.}
	Fix a positive integer $e$. Any $4$-manifold $\overline X$ homeomorphic to the Dehn filling compactification $\tilde A^e$ of $X_e$ violates the Hitchin--Thorpe inequality, and in particular it carries no Einstein metric.
	\end{proposition}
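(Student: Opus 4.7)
The plan is to apply the Hitchin--Thorpe inequality (Theorem~\ref{hitchin}) directly, feeding in the characteristic numbers of $\tilde A^e$ that were already computed in the construction. The smooth 4-manifold $\tilde A^e$ is the blow-up of the 4-torus $A^e$ at $e$ points, so we have
\[
\chi(\tilde A^e) = 0 + e = e, \qquad \tau(\tilde A^e) = 0 - e = -e.
\]
Both the Euler characteristic and the signature are oriented homotopy invariants, hence in particular oriented-homeomorphism invariants. Therefore any $\overline X$ orientation-preservingly homeomorphic to $\tilde A^e$ satisfies $\chi(\overline X) = e$ and $\tau(\overline X) = -e$; an orientation-reversing homeomorphism flips the sign of $\tau$ but leaves $|\tau|$ unchanged. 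In either case the Hitchin--Thorpe inequality
\[
\chi(\overline X) \;\geq\; \tfrac{3}{2}\,|\tau(\overline X)|
\]
becomes $e \geq \tfrac{3}{2}\,e$, which fails strictly for every positive integer $e$. By Theorem~\ref{hitchin} we conclude that $\overline X$ admits no Einstein metric, which is exactly what needs to be shown.

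There is essentially no obstacle left at this stage: the substantive work is entirely in the construction of $X_e$ and the identification (via Proposition~\ref{p:uniquefilling}) of its Dehn filling compactification with $\tilde A^e$. One small point worth recording in the write-up is that, since the inequality fails \emph{strictly}, there is no need to invoke the equality case of Theorem~\ref{hitchin} (which would rule out only $K3$ surfaces and flat manifolds); the bare inequality already suffices.
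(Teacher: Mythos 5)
Your proof is correct and follows essentially the same route as the paper: both compute $\chi(\tilde A^e)=e$ and $\tau(\tilde A^e)=-e$ from the blow-up construction, observe that these are homeomorphism invariants, and conclude that $e \geq \tfrac{3}{2}e$ fails, so Theorem~\ref{hitchin} rules out any Einstein metric on $\overline X$. Your additional remarks (handling orientation-reversing homeomorphisms via $|\tau|$, and noting the equality case of Hitchin--Thorpe is not needed since the violation is strict) are accurate refinements of the same argument, not a different approach.
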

	
	\begin{proof}
	We compute the Euler characteristic and signature of $\tilde A^e$: $|\tau(\tilde A^e)| = \chi(\tilde A^e)$ by the computations above, so it violates the Hitchin--Thorpe inequality. Any 4-manifold $\overline X$ homeomorphic to $\tilde A^e$ has the same Euler characteristic and signature, so it also violates the Hitchin--Thorpe inequality. It follows that neither carries an Einstein metric.
	\end{proof}

	\section{Non-Einstein non-compact Dehn fillings}\label{alternative}
	%\MG{`non-compact' instead of `partial'?}

	Let us once again use Hirzebruch's example as a starting point. We start with a sequence of Abelian varieties $\{A_e\}, e\in \IN$ defined as follows
	\[
	A_e:= \IC/\IZ[1, \zeta]\times \IC/\IZ[e, \zeta].
	\]
	Notice that for $e=1$ we recover the Abelian surface considered by Hirzebruch. Inside the Abelian surface $A_e$ with coordinates $(z, w)$ consider the elliptic curves
	\[
	w=0, \quad w=z, \quad w=\zeta z.
	\]
	These curves meet transversally at $e$ distinct points
	\begin{align}\label{intersecpoints}
	(0, 0), \quad (0, 1), \quad \dots, \quad (0, e-1).
	\end{align}
	Consider also $e$ vertical elliptic curves in $A_e$ of equations
	\[
	z=0, \quad z=1, \quad \dots, \quad z=e-1.
	\]
	(In the language of the previous section, this is an $e$-fold cyclic cover of Hirzebruch's example, but this time one of the curves has disconnected pre-image.)
	In other words, we have a configuration of $e+3$ elliptic curves meeting transversally at the points by Equations \eqref{intersecpoints}. We can now blow up these points to get a surface $Z_{e}$ birational to $A_e$ with $\chi(X_e)=e$. The proper transforms of the elliptic curves in $A_e$ described above, we obtain $e+3$ disjoint elliptic curves 
	\[
	D_1, \quad D_2,\quad D_3, \quad D_4, \quad \dots , \quad D_{e+3},
	\]
	where
	\[
	D_1^2=D_2^2=D_3^2=-e, \quad D_4^2=D_5^2=\dots=D^2_{e+3}=-1.
	\]
	Let us denote by $D^e$ the reduced divisor corresponding to the union of all of the $D_j$'s. We then compute that the self-intersection of the log-canonical divisor of the pair $(Z_e, D^e)$ satisfy
	\begin{align*}
	(K_{Z_e}+D^e)^2&=K^2_{Z_e}-(D^e)^2=-e+e+e+e+1+...+1=3e \\ \notag
	&=3\chi(Z_e)=3\chi(Z_e\setminus D^e).
	\end{align*}
	Thus, the pair $(Z_e, D^e)$ saturates the log-BMY inequality and as a result $Y_e := Z_e\setminus D^e$ is biholomorphic to a complex-hyperbolic surface with $e+3$ cusps, three of which have Euler number $e$.
	
	We are now in position to proving Theorem~\ref{main-noncompact}. We will prove the following, slightly stronger statement.
	
	%\MG{Piccolo problema: quando $e = 1$ (cio\`e l'esempio di Hirzebruch), riempiendo tre cuspidi ne resta soltanto una fuori, quindi l'argomento non funziona. Per\`o l'argomento funziona riempiendone una sola, credo :), che forse \`e ancora meglio... Per ora ho messo $e>1$ nell'enunciato.}
	
	\begin{proposition}
	%\MG{Minimal edits on the statement and in the proof.}
	Let $e$ be a positive integer. If $e > 1$, the $4$-manifold obtained by Dehn filling the three cusps of $Y_e$ with Euler number $e$ admits no complete Einstein metric with fibered cusp structure at infinity. If $e=1$, the $4$-manifold obtained by Dehn filling the three cusps of $Y_1$ admits no complete Einstein metric with fibered cusp structure at infinity and with negative Einstein constant (e.g., asymptotic to the complex-hyperbolic metric). %The same is true up to homeomorphism.
	\end{proposition}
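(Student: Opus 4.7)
The plan is to apply the Dai--Wei inequality (Theorem~\ref{dai}) to a hypothetical complete Einstein metric on the $4$-manifold $W_e$ obtained from $Y_e$ by Dehn filling the three cusps of Euler number $e$, force equality in that inequality, and then exclude the resulting Calabi--Yau alternative via the Cheeger--Gromoll splitting theorem.

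First I would compute the topological invariants $\chi(W_e)$ and $\tau(W_e)$. Since $Z_e$ is the blow-up of $A_e=T^4$ at the $e$ intersection points, $\chi(Z_e)=e$ and $\tau(Z_e)=-e$. The manifold $W_e$ is the complement in $Z_e$ of open tubular neighborhoods $N_4,\dots,N_{e+3}$ of the tori $D_4,\dots,D_{e+3}$, each of self-intersection $-1$. Each closed tubular neighborhood $\overline{N_j}$ is a $D^2$-bundle over $T^2$ with $\chi(\overline{N_j})=0$ and $\tau(\overline{N_j})=-1$ (its intersection form is generated by the single class $[D_j]$). Standard additivity of the Euler characteristic gives $\chi(W_e)=\chi(Z_e)=e$, and Novikov additivity applied to $Z_e=W_e\cup\bigcup_j \overline{N_j}$ yields $\tau(W_e)=-e-e\cdot(-1)=0$.

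Next I would plug these numbers into Theorem~\ref{dai}. The manifold $W_e$ has $e$ unfilled ends, each modeled on a circle bundle over $T^2$ with Euler number $+1$ (by the complex-hyperbolic orientation convention). The right-hand side of the Dai--Wei inequality evaluates to
\[
\frac{3}{2}\left|\,0 + e\cdot\frac{1-3}{3}\,\right| = e,
\]
matching $\chi(W_e)=e$. By the equality clause of Theorem~\ref{dai}, any such Einstein metric must be a complete Calabi--Yau metric, and in particular is Ricci-flat with Einstein constant $\lambda=0$. This already settles the case $e=1$, since a vanishing Einstein constant cannot be negative.

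It remains to rule out Ricci-flat metrics on $W_e$ when $e>1$. Since $W_e$ has $e\geq 2$ ends and the hypothetical Einstein metric is complete with $\Ric\geq 0$, the classical two-ends construction produces a line: pick sequences diverging to infinity along two distinct ends, join them by minimizing geodesics, and extract a subsequential limit by Arzel\`a--Ascoli. The Cheeger--Gromoll splitting theorem then yields an isometric splitting $W_e\cong \IR\times N$ with $N$ a connected complete Riemannian $3$-manifold. A short topological check shows that for connected $N$ the product $\IR\times N$ has at most two ends (exactly two when $N$ is closed, exactly one when $N$ is non-compact), which already rules out the case $e>2$. For $e=2$, $N$ must be closed, forcing $\chi(W_2)=\chi(\IR\times N)=\chi(N)=0$, contradicting $\chi(W_2)=2$. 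The main obstacle is the careful bookkeeping for $\tau(W_e)$ via Novikov additivity and the end-counting for $\IR\times N$; the line-production step is a standard consequence of completeness and $\Ric\geq 0$.
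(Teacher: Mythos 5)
Your proposal is correct and follows essentially the same route as the paper's proof: Novikov additivity yields $\tau(W_e)=0$ and $\chi(W_e)=e$, equality in the Dai--Wei inequality of Theorem~\ref{dai} forces any such Einstein metric to be Ricci-flat (which settles $e=1$ by the sign of the Einstein constant), and the Cheeger--Gromoll splitting theorem applied to a line joining two of the $e>1$ ends gives the contradiction. Your case analysis for the split $\IR\times N$ (two ends when $N$ is closed, one end when $N$ is non-compact, then $\chi(N)=0$ for closed $N$) is in fact slightly more careful than the paper's terser assertion that the splitting immediately forces the Euler characteristic to vanish.
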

	
	\begin{proof}
	%\MG{New argument to compute $\tau$---the old one is commented.}
%	By using the Dai and Wei's formula, Equation~\eqref{Dai}, we obtain
%	\[
%	\tau(X_e)+\frac{1}{3}e-sign(e)=\tau_{L^2}(X_n\setminus D^n)=\frac{n}{3},
%	\]
%	so that 
%	\[
%	\tau(M_n)=\frac{n}{3}+\frac{2}{3}n-\frac{n}{3}-\frac{n}{3}-\frac{n}{3}+1+1+1.
%	\]
%	Bottom line for any $n$ we have $\tau(M_n)=3$.
%	\MG{stiamo comunque usando l'additivit\`a di novikov. metto la citazione a kirby.}
	Since $Y_e$ is the complement of $e+3$ pairwise disjoint divisors of negative self-intersections, by Novikov additivity (see for instance~\cite[Section~II.5]{Kirby-4mflds}) we have
	\[
	\tau(Y_e) = \tau(Z_e) + (e+3) = -e + e+3 = 3.
	\]
If we now fill the three cusps of $Y_e$ with Euler number $e$, we get a manifold, call it $\overline Y$, with $e$ cusps. Again by Novikov additivity, $\tau(\overline Y)=0$.
	%\MG{Should the manifold be $\overline{Y}$ instead of $X_e$? (E credo che possiamo anche eliminare il divisore $D^e$, tanto chissenefrega)}
	If this is the case we can now obtain a contradiction if we assume we were able to Einstein Dehn fill these three cusps.
	
	%\MG{Changed this paragraph---old version commented.}
	Let us call $\overline Y$ the manifold obtained from $(X_e, D^e)$ by filling the cusps $D_1, D_2, D_3$. Then $\overline Y$ has $e$ cusps with Euler number 1, $\chi(\overline Y)=e$, and $\tau(\overline {Y})=0$, so $\overline Y$ attains equality in the Dai--Wei inequality of Theorem~\ref{dai}. In particular, by Theorem~\ref{dai}, if it supports an Einstein metric with fibered structure at infinity, this metric is Ricci flat.

%	If so, the manifold obtained from $(X_e, D^e)$ by filling the cusps $D_1, D_2, D_3$ has an Einstein metric and again by the Dai--Wei inequality of Theorem~\ref{dai} we have:
%	\begin{align}\label{noncompactHitchin}
%	2\chi(\overline Y)+3\tau(\overline Y) + \sum^e_{j=1}e(E_j) - 3\sign(e(E_j))\geq 0,
%	\end{align}
%	with equality if and only if the Einstein metric is Calabi--Yau (Ricci flat). In our case, $\chi(\overline Y)=e$, $\tau(\overline {Y})=0$, and all unfilled cusps have Euler number $1$. 
%	Thus, we get equality in Equation~\eqref{noncompactHitchin}.
	%\MG{Io scambierei la parte $e=1$ con la parte $e > 1$: sostanzialmente per la parte $e=1$ abbiamo gi\`a finito qui, mentre c'\`e solo da lavorare se $e > 1$. Che ne dici?}
	Now, since $e>1$, after filling the three cusps with Euler number $e$, we have that $\overline{Y}$ has at least two ends. Given any of the two non-compact ends, we can construct a line, i.e., a geodesic $\gamma: (-\infty, \infty)\to \overline{Y}$ such that $d(\gamma(t), \gamma(s))=|t-s|$ for any $s, t\in\IR$. By the Cheeger--Gromoll splitting theorem~\cite{CG71} such a line splits isometrically, and as a result our partially compactified manifold should have topological Euler characteristic equal to zero. On the other hand, $\chi(\bar{Y})=e$ and we then get a contradiction. Finally, if $e=1$, an Einstein metric asymptotic to a complex-hyperbolic one cannot be Ricci flat: the cosmological constant of the complex hyperbolic metric is negative and not zero as for Calabi--Yau surfaces.
	\end{proof}
	
	%\MG{To add: the new argument with the splitting lemma. Maybe a remark/proposition/proof about the two-cusped case that we discussed last time?}
	%%%%%%%%%%%%%%%%%%%%

\section{Final remarks and questions}

%\MG{New question.}
We conclude with some remarks and questions concerning Theorems~\ref{main-compact} and~\ref{main-noncompact}. In light of the theorem, one is naturally lead to ask the following question.

\begin{question}\label{q:onecusp}
%\MG{minor edits in the question}
	Fix an integer $e > 0$. Does there exist a complex-hyperbolic surface $Z_e$ with a torus-like cusp with Euler number $e$ such that Dehn filling of $Z_e$ does not support an Einstein metric with fibered cusp at infinity?
\end{question}

A positive answer would immediately imply that there is no single cusp-filling procedure that starts from one torus-like cusps of a complex-hyperbolic surface and produces an Einstein metric. In this sense, Theorem~\ref{main-compact} proves the impossibility of complex-hyperbolic Dehn filling in a more constrained setting, namely when we require the Dehn filling to produce a closed orientable manifold. On the other hand, we could ask that the (hypothetical) metric produced by such an operation is itself asymptotic to a standard cusp-like metric at infinity. This is indeed true of the metrics produced by Thurston and by Anderson, where in fact the metric at the cusp is asymptotic to the original metric, thus allowing for the construction to be iterated~\cite{And06}. If we impose the further constraint that such an Einstein metric is asymptotic to the initial complex-hyperbolic metric, the proof of the second statement simplifies, and we can also produce answers to Question~\ref{q:onecusp} whenever $e$ is divisible by $3$. (These examples are 2-cusped, so after Dehn filling one cusp we cannot use the splitting theorem as we do for $X_e$.)

%\MG{new paragraph and question. forse ho scritto un po' troppe cazzate, per\`o :)}
As mentioned at the end of Section~\ref{s:background}, thanks to~\cite[Theorem A]{DiCerbo} and Yau's celebrated result~\cite{YauFields}, we know that complex-hyperbolic K\"ahler--Einstein Dehn filling can be performed if we pass to a finite cover. The proof uses the ampleness of the canonical divisor of the toroidal compactification of a (sufficiently large) finite cover. It is interesting to ask whether such an argument holds for metric, rather than complex-algebraic reasons, as in Anderson's construction. More precisely, we ask the following question.

\begin{question}
Does there exists a constant $L$ such that, if the fiber of a torus-like cusp $c$ of a complex-hyperbolic surface $X$ has length larger than $L$, then the 4-manifold obtained by Dehn filling $X$ at $c$ supports an Einstein metric that is not K\"ahler--Einstein?
\end{question}

This would show that, given a complex-hyperbolic surface, one could find a finite cover in which the fiber of a cusp gets sufficiently long so as to assure that the corresponding Dehn filling is Einstein. In a way, the choice of the cover makes up for the lack of choice of slope of the filling. In this sense, this hypothetical construction would resemble Fine and Premoselli's branched covering construction \cite{FP20}.
%Finally, it is tantalizing to ask if similar obstructions can be derived in higher dimensions. This seems to be a quite tricky question, and an approach similar to the one we presented here appears to be currently out of reach. Indeed, no obstructions for the existence of an Einstein metric with negative cosmological constant on manifolds of real dimension $n\geq 5$ are currently known! Also, to make things more complicated, smooth toroidal compactifications of complex-hyperbolic manifolds of complex dimension $n\geq 6$ always support a K\"ahler--Einstein metric \cite{YauFields, BakkerTsimerman}, as explained in Section \ref{intro}. 
%\textcolor{red}{Thus, a potential proof of the fact that Einstein Dehn filling compactifications cannot exist in higher dimensions will have to rely upon obstructions depending on topological invariants that distinguish different Dehn filling compactifications.}

\bibliography{ComplexDehn}

\bibliographystyle{amsalpha}
	
\end{document}